\newtheorem{lemma}{Lemma}
\newtheorem{theorem}[lemma]{Theorem}
\newtheorem{corollary}[lemma]{Corollary}
\newtheorem{proposition}[lemma]{Proposition}
\newcommand*{\pd}{\mathop{\mathrm{pd}}\displaylimits}
\title[Unions of chains of ideals]{On the unions of ascending chains of direct sums of ideals of $h$-local Pr\"{u}fer domains}
\author{J. E. Mac\'{\i}as-D\'{\i}az}
\address{Departamento de Matem\'{a}ticas y F\'{\i}sica, Universidad Aut\'{o}noma de Aguascalientes, Avenida Universidad 940, Ciudad Universitaria, Aguascalientes, Ags. 20100, Mexico}
\email{jemacias@correo.uaa.mx}
\subjclass[2000]{Primary 13C10, 13C05; Secondary 13F05, 16D40}
\keywords{Pontryagin-Hill theorems, ascending chains of modules, direct sums of ideals, $h$-local Pr\"{u}fer domains, balancedness of modules}
\date{\today}
\begin{document}

\begin{abstract}
In this work, we investigate conditions under which unions of ascending chains of modules which are isomorphic to direct sums of ideals of an integral domain are again isomorphic to direct sums of ideals. We obtain generalizations of the Pontryagin-Hill theorems for modules which are direct sums of ideals of $h$-local Pr\"{u}fer domains. Particularly, we prove that a torsion-free module over a Dedekind domain with a countable number of maximal ideals is isomorphic to a direct sum of ideals if it is the union of a countable ascending chain of pure submodules which are isomorphic to direct sums of ideals.
\end{abstract}

\maketitle

\section{Introduction\label{Sec:Intro}}

In the last century, Lev Pontryagin and Paul Hill studied conditions under which torsion-free abelian groups are free. In their investigations, the concept of purity of subgroups was crucial. More precisely, a subgroup $H$ of an abelian group $G$ is \emph {pure} if every equation of the form $k x = a \in H$, with $k \in \mathbb {Z}$, is solvable in $H$ whenever it is solvable in $G$. Equivalently, solubility in $G$ of each system of equations of the form
\begin{equation}
\sum _{j = 1} ^m k _{i j} x _j = a _i \in H \quad (i = 1 , \dots , n),
\end{equation}
with every $k _{i j} \in \mathbb {Z}$, implies its solubility in $H$.

In $1934$, Pontryagin proved that a countable, torsion-free abelian group is free if and only if every finite rank, pure subgroup is free \cite {Pontryagin}. Equivalently, every properly ascending chain of pure subgroups of the same finite rank is finite. From the proof of this result, it follows that a torsion-free abelian group $G$ is free if there exists an ascending chain
\begin{equation}
0 = G _0 \leq G _1 \leq \dots \leq G _n \leq \dots \quad (n < \omega) \label{CountChain}
\end{equation}
consisting of pure subgroups of $G$ whose union is equal to $G$, such that every $G_n$ is free and countable.

Later, in $1970$, Hill established that, in order for an abelian group $G$ to be free, it is sufficient that it be the union of a countable ascending chain (\ref {CountChain}) of free, pure subgroups \cite {Hill}. In other words, Hill proved that the condition of countability on the cardinality of the links $G _n$ in Pontryagin's theorem was superfluous. The proof of this theorem relies on some important facts about commutative groups, one of them being that subgroups of torsion-free abelian groups can be embedded in pure subgroups of the same rank. Applications of these criteria may be actually found in a variety of algebraic results \cite{Cornelius, Eklof, Shelah, Mekler}.

In view of the importance of the Pontryagin-Hill theorems in algebra, it is highly desirable to explore the possibility to generalize these criteria to more general scenarios. In this article, we generalize those results to modules which are isomorphic to direct sums of ideals of $h$-local Pr\"{u}fer domains. Section \ref {Sec:Balanc} introduces the concept of balanced submodules and provides some useful criteria for balancedness. Section \ref {Sec:h-local} serves as an introduction to $h$-local Pr\"{u}fer domains and their properties, while Section \ref {Sec:Main} presents the most important theorems of this work. 

\section{Balancedness\label{Sec:Balanc}}

Once and for all we declare that, throughout this work, $R$ will represent an integral domain unless stated otherwise. Modules are understood to be over $R$ when no other statement is done.

A submodule $N$ of an $R$-module $M$ is \emph {relatively divisible} if the inclusion $N \cap r M \leq r N$ holds, for every $r \in R$. Equivalently, solubility in $M$ of equations of the form $r x = a \in N$, with $r \in R$, implies their solubility in $N$. We say that $N$ is \emph {pure} in $M$ if every finite system of equations
\begin{equation}
\sum _{j = 1} ^m r _{i j} x _j = a _i \in N \quad (i = 1 , \dots , n),
\end{equation}
with $r _{i j} \in R$, is solvable in $N$ whenever it is solvable in $M$. Under these circumstances, a short-exact sequence $0 \rightarrow N \rightarrow M \rightarrow Q \rightarrow 0$ is $RD$-exact (respectively, pure-exact) if $N$ is a relatively divisible (respectively, pure) submodule of $M$. Evidently, purity implies relative divisibility, and they both coincide for modules over Pr\"{u}fer domains \cite {Warfield}, that is, integral domains in which finitely generated ideals are projective. Moreover, Pr\"{u}fer domains are the only integral domains for which relative divisibility and purity are equivalent \cite{Cartan-Eilenberg}.

A submodule $A$ of the $R$-module $B$ is \emph {balanced} if $B / A$ is torsion-free, and every rank $1$, torsion-free $R$-module $J$ has the projective property with respect to the short-exact sequence $0 \rightarrow A \rightarrow B \rightarrow B / A \rightarrow 0$. In other words, for every homomorphism $\phi$ from $J$ into $B / A$, there exists a homomorphism $\psi$ from $J$ into $B$, which makes the following diagram commute:
\begin{equation}
\xymatrix{
 & & & J \ar[d]^\phi\ar[ld]_\psi & \\
0 \ar[r] & A \ar[r] & B\ar[r] & B / A\ar[r] & 0}
\end{equation}
In this context, a short-exact sequence $0 \rightarrow A \rightarrow B \rightarrow C \rightarrow 0$ of $R$-modules, with $A$ balanced in $B$ and $C$ torsion free, is called \emph {balanced-exact}. Clearly, direct sums of rank $1$ modules have the projective property with respect to balanced-exact sequences. For a list of relevant properties of relative divisibility, purity and balancedness of modules, we refer to \cite {Fuchs-Salce2}.

\begin{lemma}
Let $L$ be a pure submodule of the torsion-free module $M$, with the property that $L$ is balanced in $N$, for every $L \leq N \leq M$ such that $N / L$ has rank $1$. Then, $L$ is balanced in $M$. \label{Lemma:5.1}
\end{lemma}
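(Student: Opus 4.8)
The plan is to verify the two defining conditions of balancedness for $L$ in $M$ directly. First I would dispatch the torsion-freeness requirement: since $L$ is pure (hence relatively divisible) in the torsion-free module $M$, any relation $r(m + L) = 0$ in $M / L$ forces $r m \in L \cap r M \leq r L$, say $r m = r l$ with $l \in L$, and torsion-freeness of $M$ then yields $m = l \in L$. Hence $M / L$ is torsion-free, and it remains only to establish the projective (lifting) property.

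For that property, let $\pi \colon M \to M / L$ denote the canonical projection, let $J$ be a rank $1$ torsion-free module, and let $\phi \colon J \to M / L$ be an arbitrary homomorphism. The key move is to set $N = \pi ^{-1} (\phi (J))$, so that $L \leq N \leq M$ and $\pi$ restricts to an isomorphism $N / L \cong \phi (J)$. Because $\phi (J)$ is a homomorphic image of the rank $1$ module $J$ lying inside the torsion-free module $M / L$, it is torsion-free of rank at most $1$.

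If $\phi (J) = 0$ then $\phi$ itself is zero and the zero map lifts it, so I may assume $\phi (J)$, and therefore $N / L$, has rank exactly $1$. At this point the hypothesis applies to this particular intermediate module: $L$ is balanced in $N$. Viewing $\phi$ as a homomorphism $J \to N / L$ (its image lies there) and invoking the projective property of the balanced-exact sequence $0 \to L \to N \to N / L \to 0$ with respect to the rank $1$ module $J$, I obtain $\psi \colon J \to N$ with $\pi | _N \circ \psi = \phi$. Composing $\psi$ with the inclusion $N \hookrightarrow M$ then produces a homomorphism $J \to M$ lifting $\phi$ through $\pi$, which is exactly what balancedness of $L$ in $M$ demands.

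I do not expect a genuine obstacle; the argument turns entirely on the single observation that the image of a rank $1$ module has rank at most $1$, so that the preimage $N = \pi ^{-1} (\phi (J))$ is a legitimate instance of the rank $1$ hypothesis. The only step requiring a little care is the bookkeeping that identifies $\pi | _N$ with the quotient map $N \to N / L$ followed by the inclusion $N / L \hookrightarrow M / L$; this is what guarantees that a lift found inside $N$ remains a lift once it is pushed forward into $M$.
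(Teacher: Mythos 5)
Your proposal is correct and follows essentially the same route as the paper's own proof: restrict attention to the preimage $N$ of $\phi(J)$, observe that $N/L$ has rank at most $1$, invoke the hypothesis to lift $\phi$ into $N$, and push the lift forward into $M$. Your version is in fact slightly more careful than the paper's, since you explicitly verify that $M/L$ is torsion-free via purity and separately dispose of the rank-zero case, both of which the paper leaves implicit.
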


\begin{proof}
Let $J$ be a rank $1$, torsion-free module, and let $\phi$ be a homomorphism from $J$ into $M / L$. The image of $J$ under $\phi$ is a submodule $N / L$ of $M / L$ of rank at most $1$. By hypothesis, there exists a homomorphism $\psi$ from $J$ into $L$, such that the following diagram with exact rows commutes:
\begin{equation}
\xymatrix{
 & & & J \ar[d]^{\phi ^\prime}\ar[ld]_\psi & \\
0 \ar[r] & L \ar[r]\ar@{=}[d] & N\ar[r]\ar[d]^\iota & N / L\ar[r]\ar[d]^{\iota ^\prime} & 0 \\
0 \ar[r] & L \ar[r] & M\ar[r] & M / L\ar[r] & 0}
\end{equation}
Here, the homomorphisms $\iota$ and $\iota ^\prime$ are inclusions, and $\phi ^\prime$ is the restriction of $\phi$ onto $N / L$. We conclude that $J$ is projective with respect to the sequence of the bottom row and, consequently, that $L$ is balanced in $M$.
\end{proof}

We close this section with another criterion for balancedness.

\begin{proposition}
Consider the following commutative diagram with exact rows and torsion-free modules:
\begin{equation}
\xymatrix{
0 \ar[r] & L \ar[r]\ar[d] & N\ar[r]^\alpha\ar[d]^\mu & M\ar[r]\ar[d]^\tau & 0 \\
0 \ar[r] & L ^\prime \ar[r] & N ^\prime\ar[r]^\beta & M ^\prime\ar[r] & 0} \label{Eq:Diag5.2}
\end{equation}
If there exists a homomorphism $\rho$ from $M ^\prime$ to $M$, such that $\tau \rho = 1$, and if the top row of (\ref {Eq:Diag5.2}) is balanced-exact, then the bottom row is also balanced-exact. \label{Lemma:5.2}
\end{proposition}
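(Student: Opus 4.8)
The plan is to verify the definition of balancedness directly for the bottom row. Since the modules in the diagram are torsion-free, the cokernel $M'$ is torsion-free, so the torsion-free half of the balanced-exactness of $0 \to L' \to N' \xrightarrow{\beta} M' \to 0$ comes for free, and the entire content lies in establishing the lifting property for rank $1$ modules. Concretely, I would take an arbitrary rank $1$, torsion-free module $J$ together with a homomorphism $\phi \colon J \to M'$, and I would produce a homomorphism $\psi \colon J \to N'$ satisfying $\beta \psi = \phi$.

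The key idea is to transport the lifting problem up to the top row, where balancedness is already available, by means of the section $\rho$. First I would form the composite $\rho \phi \colon J \to M$. Because the top row is balanced-exact and $J$ is rank $1$ and torsion-free, this map lifts through $\alpha$: there is a homomorphism $\chi \colon J \to N$ with $\alpha \chi = \rho \phi$. I would then push $\chi$ down into $N'$ by setting $\psi = \mu \chi$, using the vertical map $\mu$ supplied by the diagram.

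It remains only to check that $\psi$ is the desired lift. Using commutativity of the right-hand square of (\ref{Eq:Diag5.2}), namely $\beta \mu = \tau \alpha$, together with the hypothesis $\tau \rho = 1$, one computes $\beta \psi = \beta \mu \chi = \tau \alpha \chi = \tau \rho \phi = \phi$. Thus $\phi$ lifts to $N'$, every rank $1$ torsion-free module has the projective property with respect to the bottom row, and therefore $L'$ is balanced in $N'$, giving balanced-exactness of the bottom row.

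I do not anticipate a serious obstacle here: the argument is essentially a short diagram chase, and the only points requiring care are the bookkeeping of which square commutes and the correct deployment of the splitting relation $\tau \rho = 1$, which is exactly what converts a lift obtained over $M$ into a lift over $M'$. The one structural observation worth flagging at the outset is that balanced-exactness of the top row is used precisely to obtain $\chi$, so the splitting $\rho$ is the device that makes the hypothesis on the top row usable for the bottom row.
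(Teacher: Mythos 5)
Your proof is correct and follows exactly the paper's argument: compose $\phi$ with the section $\rho$, lift through $\alpha$ using balanced-exactness of the top row, push down via $\mu$, and verify $\beta\psi = \tau\alpha\chi = \tau\rho\phi = \phi$ by commutativity of the right square. The paper compresses the final computation into a single ``clearly,'' whereas you spell it out, but the two proofs are identical in substance.
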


\begin{proof}
Let $\phi$ be a homomorphism from a rank $1$, torsion-free module $J$ into $M ^\prime$. Then, there exists a homomorphism $\sigma$ from $J$ into $N$, such that $\alpha \sigma = \rho \phi$. Clearly, the homomorphism $\psi = \mu \sigma$ has the property that $\beta \psi = \phi$, and we conclude that the bottom row is balanced-exact.
\end{proof}

\section{$h$-local domains\label{Sec:h-local}}

An \emph {$h$-local domain} is an integral domain $R$ with the following properties:
\begin{enumerate}
\item[(i)] every nonzero prime ideal of $R$ is contained in exactly one maximal ideal, and
\item[(ii)] every nonzero element of $R$ is contained in all but a finite number of maximal ideals.
\end{enumerate}
A \emph {valuation domain} is an integral domain where ideals form a chain under inclusion; clearly, valuation domains have a unique maximal ideal. By a \emph {Dedekind domain} we mean a hereditary domain, that is, a domain where all the ideals are projective. These two types of domains are examples of $h$-local Pr\"{u}fer domain.

Given two ideals $I$ and $J$ of an integral domain $R$ with field of quotients $Q$, the \emph {residual} of $I$ modulo $J$ is defined by $I : J = \{ a \in Q : a J \leq I \}$. Indeed, Olberding proved \cite {Olberding} that a Pr\"{u}fer domain $R$ is $h$-local if and only if $(J + K) : I = (J : I) + (K : I)$, for any ideals $I$, $J$ and $K$ of $R$. With this characterization, he proves the following result, which generalizes the well-known fact that, over valuation domains and Dedekind domains, pure submodules of modules which are finite direct sums of ideals are direct summands isomorphic to direct sums of ideals.

\begin{lemma}[Olberding \cite {Olberding}]
Let $R$ be an $h$-local Pr\"{u}fer domain, and let $A$ be a pure submodule of an $R$-module $B$ which is a finite direct sum of ideals of $R$. Then,
\begin{enumerate}
\item[\rm (i)] $A$ is a summand of $B$, and
\item[\rm (ii)] $A$ is isomorphic to a direct sum of ideals of $R$. \qed
\end{enumerate} \label{Lemma:6.2}
\end{lemma}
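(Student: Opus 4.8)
The plan is to induct on the number $n$ of summands in $B = I_1 \oplus \cdots \oplus I_n$. The case $n = 1$ is immediate: here $B$ is a rank $1$, torsion-free module, and since $A$ is pure the quotient $B/A$ is torsion-free; as $A$ is either $0$ or of rank $1$, the quotient has rank $0$ and hence vanishes, so $A$ is either $0$ or all of $B$, and both conclusions hold trivially. For the inductive step I would split off the last coordinate by means of the projection $\pi \colon B \to I_n$, whose kernel is $B' = I_1 \oplus \cdots \oplus I_{n-1}$, and set $A' = A \cap B' = \ker(\pi|_A)$.

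First I would verify that $A'$ is pure in $B'$: an equation $r x = a'$ with $a' \in A'$ that is solvable in $B'$ is solvable in $B$, hence, by purity of $A$, in $A$, and applying $\pi$ together with torsion-freeness of $I_n$ forces any such solution to lie in $B' \cap A = A'$. The induction hypothesis then yields $B' = A' \oplus C'$ with $A'$ isomorphic to a direct sum of ideals. The remaining piece is governed by the isomorphism $A/A' \cong \pi(A)$, which realizes $A/A'$ as a submodule of the ideal $I_n$; in particular it is torsion-free of rank at most $1$, and every such module is isomorphic to a fractional ideal and hence, after clearing denominators, to an ideal of $R$.

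With these pieces in hand, everything reduces to a splitting problem. Since $R$ is a Pr\"{u}fer domain, purity coincides with relative divisibility and the quotients appearing are torsion-free; the operative principle of Section~\ref{Sec:Balanc} is that a direct sum of rank $1$ modules is projective with respect to balanced-exact sequences. Thus, once I know that the relevant pure embeddings are in fact balanced, the quotient $A/A'$ splits off to give $A \cong A' \oplus (A/A')$, a direct sum of ideals, proving (ii); and an analogous argument applied to the embedding $A \leq B$, whose torsion-free quotient is again assembled from the ideals $I_j$, shows that $A$ is a direct summand of $B$, proving (i). To establish balancedness of a pure submodule I would invoke Lemma~\ref{Lemma:5.1}, which reduces the task to showing that $A$ is balanced in every intermediate module $N$ with $A \leq N \leq B$ and $N/A$ of rank $1$, with Proposition~\ref{Lemma:5.2} available to transport balancedness along the comparison maps between such extensions.

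The step I expect to be the main obstacle is precisely this upgrade from purity to balancedness, that is, the splitting of a single rank $1$ extension of a pure submodule. Relative divisibility only controls one-variable equations $r x = a$, whereas balancedness demands that homomorphisms out of an arbitrary rank $1$, torsion-free module $J$ lift through the surjection. To produce such lifts I would localize at each maximal ideal $\mathfrak{m}$, where $R_{\mathfrak{m}}$ is a valuation domain and the classical theory already makes pure submodules of finite direct sums of ideals into direct summands, and then reassemble the local solutions into a global homomorphism. The feature of $h$-local Pr\"{u}fer domains that makes this reassembly possible is Olberding's residual identity $(J + K) : I = (J : I) + (K : I)$: since every nonzero element lies in only finitely many maximal ideals and every nonzero prime in a unique maximal ideal, this additivity of residuals is exactly what is needed to guarantee that the finitely many local lifts are mutually compatible and descend to a single $R$-homomorphism. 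Carrying out this local-to-global gluing carefully is, I expect, the technical heart of the proof.
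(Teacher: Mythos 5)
The paper itself offers no proof of this lemma: it is Olberding's theorem, imported with a citation (hence the \qed in the statement), so your proposal cannot be measured against an in-paper argument and must stand on its own. It does not. The inductive scaffolding is correct but routine: the base case is fine; your verification that $A'=A\cap B'$ is pure in $B'$ only treats single equations, i.e.\ relative divisibility, but that suffices since $R$ is Pr\"ufer; and $A/A'\cong\pi(A)$ is literally an ideal of $R$ (it is a submodule of $I_n\leq R$), no fractional ideals needed. The problem is that this scaffolding reduces the lemma exactly to the claim you then leave unproved: that a pure submodule whose quotient is torsion-free of rank $1$ is a direct summand (equivalently, that purity implies balancedness in this setting). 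That claim is not a formality --- by Olberding's theorem \cite{Olberding}, the property that pure submodules of finite direct sums of ideals are summands \emph{characterizes} $h$-locality among Pr\"ufer domains, so it cannot be extracted from purity plus the Pr\"ufer property; an essential use of the residual identity is unavoidable. Your localize-and-glue plan for this step is not an argument and is flawed as described: $R$ may have infinitely many maximal ideals ($h$-locality gives finiteness only element-wise, for each nonzero ring element), so there is no ``finitely many local lifts'' situation to reconcile, and you give no mechanism by which the identity $(J+K):I=(J:I)+(K:I)$ converts a family of local splittings of $A_{\mathfrak{m}}\leq B_{\mathfrak{m}}$ into a single global homomorphism. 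What you call the technical heart is, in fact, the entire theorem, and it is missing.

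There is a second, smaller gap in your deduction of (i). Even granting that $A$ is balanced in $B$, splitting of $0\rightarrow A\rightarrow B\rightarrow B/A\rightarrow 0$ requires $B/A$ to have the projective property with respect to that sequence, which you would get from knowing $B/A$ is a direct sum of rank $1$ torsion-free modules. Your phrase ``assembled from the ideals $I_j$'' is not a justification: a finite rank torsion-free module over a Pr\"ufer, or even Dedekind, domain need not decompose into rank $1$ summands, so this must itself be proved (for instance by strengthening the induction to show simultaneously that $B/A$ is a direct sum of ideals), not asserted.
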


\begin{lemma}
Let $R$ be an $h$-local Pr\"{u}fer domain. Every pure submodule of a module which is a countable direct sum of ideals of $R$ is isomorphic to a direct sum of ideals of $R$. \label{Lemma:5.4}
\end{lemma}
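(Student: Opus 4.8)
The plan is to reduce the countable case to Olberding's finite result, Lemma~\ref{Lemma:6.2}, by means of the evident finite filtration of the ambient module. Write $B = \bigoplus_{n < \omega} I_n$, with each $I_n$ an ideal of $R$, and set $B_n = \bigoplus_{k < n} I_k$, so that $B = \bigcup_{n<\omega} B_n$ and each $B_n$ is a finite direct sum of ideals which is, moreover, a direct summand of $B$. Given a pure submodule $A \leq B$, I would introduce the submodules $A_n = A \cap B_n$; since every element of $A$ lies in some $B_n$, clearly $A = \bigcup_{n<\omega} A_n$.

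First I would check that $A_n$ is pure in $B_n$. If $r x = a \in A_n$ is solvable in $B_n$, then it is solvable in $B$, so by purity of $A$ there is $a ^\prime \in A$ with $r a ^\prime = a$; comparing $a ^\prime$ with a solution $b \in B_n$ and using that $B$ is torsion-free forces $a ^\prime = b \in B_n$, whence $a ^\prime \in A \cap B_n = A_n$. Thus Lemma~\ref{Lemma:6.2}(ii) shows that each $A_n$ is isomorphic to a direct sum of ideals, and since $A_n$ has rank at most that of $B_n$, this direct sum is finite; in particular, $A_n$ is isomorphic to a finite direct sum of ideals of $R$.

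Next I would produce complements. Because $A_{n+1} \cap B_n = A_n$, the inclusion induces an embedding $A_{n+1} / A_n \hookrightarrow B_{n+1} / B_n \cong I_n$; in particular $A_{n+1} / A_n$ is torsion-free, so $A_n$ is relatively divisible---and hence, over the Pr\"{u}fer domain $R$, pure---in $A_{n+1}$. Applying Lemma~\ref{Lemma:6.2}(i) to the finite direct sum of ideals $A_{n+1}$, I obtain a decomposition $A_{n+1} = A_n \oplus C_n$, where the summand $C_n \cong A_{n+1} / A_n$ is isomorphic to a submodule of the ideal $I_n$, and is therefore itself isomorphic to an ideal of $R$ (or is zero).

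Finally I would telescope the construction. Taking $A_0 = 0$ and iterating $A_{n+1} = A_n \oplus C_n$ yields, by induction, the internal decomposition $A_n = \bigoplus_{k < n} C_k$; passing to the union over $n$ then gives $A = \bigoplus_{k < \omega} C_k$, a direct sum of ideals of $R$, as required. The only point demanding genuine care is the verification that purity descends to the finite stages and that the successive complements $C_n$ can be selected within a single ascending chain so that the internal direct sums truly telescope; once Lemma~\ref{Lemma:6.2} is available, however, each stage is finite and the remainder is bookkeeping.
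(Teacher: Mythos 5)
Your proof is correct, but it takes a genuinely different route from the paper's. The paper fixes a maximal independent set $\{ a _n \}$ of $A$ and builds an ascending chain of finite rank, pure submodules of $A$ by purification: at each step a finite partial sum $N$ of the decomposition of $M$ contains the relevant independent elements, the purification of the new generating set lands inside $N$ (because $M / N$ is torsion-free), and Lemma \ref{Lemma:6.2} is invoked twice, once to see that the new link is a finite direct sum of ideals and once to split off a complement. You instead use the canonical filtration $A _n = A \cap B _n$ with $B _n = \bigoplus _{k < n} I _k$. Your verification that $A _n$ is relatively divisible in $B _n$ (via purity of $A$ in $B$ plus uniqueness of solutions in a torsion-free module) is sound, and since $R$ is Pr\"{u}fer, relative divisibility upgrades to purity, so Lemma \ref{Lemma:6.2} applies; likewise the embedding $A _{n + 1} / A _n \hookrightarrow B _{n + 1} / B _n \cong I _n$ correctly yields both the purity of $A _n$ in $A _{n + 1}$ and the identification of each complement $C _n$ with a submodule of $I _n$, hence with an ideal of $R$, since any $R$-submodule of an ideal is itself an ideal. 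Your route buys something the paper's does not: it is choice-free (no independent sets, no purifications) and gives the sharper conclusion that $A \cong \bigoplus _{n < \omega} C _n$ with $C _n$ embedding in $I _n$, so the decomposition of $A$ is aligned summand-by-summand with that of $M$; the paper's purification machinery is heavier here, but it is the same device the author reuses in the Pontryagin-type theorem of Section \ref{Sec:Main}, which likely explains the uniform treatment. One last remark: your closing worry about selecting the complements ``within a single ascending chain'' is unfounded --- your chain $(A _n)$ is fixed in advance and each $C _n$ is chosen inside $A _{n + 1}$, so the internal decompositions $A _n = \bigoplus _{k < n} C _k$ telescope automatically, and independence of the full family $\{ C _k \} _{k < \omega}$ follows because any finite subfamily already lies in some $A _n$.
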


\begin{proof}
Let us assume that $M$ is the direct sum of the ideals $I _n$ of $R$, with $n < \omega$, and let $A$ be a pure submodule of $M$. Fix a maximal independent set $\{ a _n : n \in \mathbb {Z} ^+ \}$ of $A$, and assume that, for some $n < \omega$, we have already constructed the finite ascending chain $0 = A _0 \leq A _1 \leq \dots \leq A _n$ of submodules of $A$, satisfying the following properties, for every $i < n$:
\begin{enumerate}
\item[(a)] $A _{i + 1}$ contains $\{ a _1 , \dots , a _{i + 1} \}$,
\item[(b)] $A _{i + 1}$ is a finite rank, pure submodule of $A$, and
\item[(c)] $A _{i + 1} = A _i \oplus B _i$, for some submodule $B _i$ of $A _{i + 1}$ isomorphic to a direct sum of ideals of $R$.
\end{enumerate}

Take a maximal independent set $Y _n$ of $A _n$, and fix a finite direct sum $N$ of ideals of $R$ in the decomposition of $M$, which contains the set $Y \cup \{ a _{n + 1} \}$. Clearly, the purification of this set has finite rank, is contained in $N$ and, by Lemma \ref {Lemma:6.2}, is a finite direct sum of ideals of $R$. Moreover, $A _{n + 1} = A _n \oplus B _n$, for some submodule $B _n$ of $A _{n + 1}$ isomorphic to a direct sum of ideals of $R$. By induction, $A$ is the union of the countable ascending chain
\begin{equation}
0 = A _0 \leq A _1 \leq \dots \leq A _n \leq \dots \quad (n < \omega). \label {Eq:Chain6.11}
\end{equation}
We conclude that $A$ is isomorphic to the direct sum of the modules $B _n$, for $n < \omega$. Thus, $A$ is isomorphic to a direct sum of ideals of $R$.
\end{proof}

\begin{lemma}
Let $R$ be an $h$-local Pr\"{u}fer domain, let $M$ be a direct sum of ideals of $R$, and let $A$ be a pure submodule of $M$. If $A$ is the direct sum of countable rank submodules, then it is isomorphic to a direct sum of ideals of $R$. \label {Lemma:6.5}
\end{lemma}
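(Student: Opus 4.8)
The plan is to reduce the statement to Lemma \ref{Lemma:5.4} one summand at a time. Write $A = \bigoplus_{k \in K} A_k$, where each $A_k$ is a submodule of countable rank. The first observation is that every $A_k$ is pure in $M$: being a direct summand of $A$, it is pure in $A$, and since purity is transitive, purity of $A$ in $M$ yields purity of $A_k$ in $M$.

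The heart of the argument is to show that each countable rank summand $A_k$ is actually contained in a countable direct sum of ideals drawn from the given decomposition $M = \bigoplus_{\lambda \in \Lambda} I_\lambda$. To this end, fix a maximal independent set $\{ x_i : i < \omega \}$ of $A_k$; each $x_i$ has finite support in the decomposition of $M$, so the union $\Lambda_0$ of these supports is a countable subset of $\Lambda$. Set $M_0 = \bigoplus_{\lambda \in \Lambda_0} I_\lambda$ and $M_1 = \bigoplus_{\lambda \in \Lambda \setminus \Lambda_0} I_\lambda$, so that $M = M_0 \oplus M_1$. I claim that $A_k \leq M_0$. Indeed, given $a \in A_k$, maximality of the independent set provides a nonzero $r \in R$ with $r a$ lying in the submodule generated by the $x_i$, whence $r a \in M_0$. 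Projecting onto $M_1$ then annihilates $r a$, and since $M_1$ is torsion-free and $r \neq 0$, the projection of $a$ onto $M_1$ must vanish; therefore $a \in M_0$.

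With the inclusion $A_k \leq M_0$ in hand, purity of $A_k$ in $M$ restricts to purity of $A_k$ in $M_0$, because any system of equations with constants in $A_k$ that is solvable in $M_0$ is \emph{a fortiori} solvable in $M$, hence in $A_k$. Now $M_0$ is a countable direct sum of ideals of $R$, so Lemma \ref{Lemma:5.4} applies and shows that $A_k$ is isomorphic to a direct sum of ideals of $R$. Assembling over all $k \in K$, the module $A = \bigoplus_{k \in K} A_k$ is a direct sum of modules each isomorphic to a direct sum of ideals, and consequently $A$ itself is isomorphic to a direct sum of ideals of $R$.

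The only genuinely delicate point is the containment $A_k \leq M_0$ of the second paragraph, that is, converting countability of the rank into an honest countable support inside the fixed decomposition of $M$; the transitivity and restriction properties of purity, together with the direct appeal to Lemma \ref{Lemma:5.4}, are then routine.
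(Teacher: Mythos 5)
Your proof is correct and follows essentially the same route as the paper: reduce to Lemma \ref{Lemma:5.4} by showing each countable rank summand sits as a pure submodule inside a countable subsum of the given decomposition of $M$. The only difference is that you spell out in detail the containment $A_k \leq M_0$ (via supports of a maximal independent set and torsion-freeness), which the paper asserts without proof.
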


\begin{proof}
Assume that $A$ is the direct sum of countable rank submodules $A _\beta$, where $\beta$ runs in a set of indexes $\Lambda$, and let $A$ be a pure submodule of $M = \oplus _{\alpha < \Omega} I _\alpha$, where every $I _\alpha$ is an ideal of $R$. Then, every $A _\beta$ is contained as a pure submodule in a countable direct sum of ideals of $R$ in the decomposition of $M$. By Lemma \ref {Lemma:5.4}, every $A _\beta$ is isomorphic to a direct sum of ideals of $R$ and, consequently, $A$ is likewise isomorphic to a direct sum of ideals of $R$.
\end{proof}

For our next result, we employ the well-known theorem by Kaplansky which states that every direct summand of a direct sum of countable rank modules is also a direct sum of countable rank modules \cite {Kaplansky}.

\begin{theorem}
Let $R$ be an $h$-local Pr\"{u}fer domain. Every direct summand of a module which is a direct sum of ideals of $R$ is isomorphic to a direct sum of ideals of $R$.
\end{theorem}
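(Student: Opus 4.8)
The plan is to combine three ingredients: that a direct summand is always a pure submodule, that a direct sum of ideals is a direct sum of countable rank modules, and that Kaplansky's theorem propagates the latter property to summands. Lemma~\ref{Lemma:6.5} then closes the argument with no further work.

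First I would set $M = \bigoplus_{\alpha < \Omega} I_\alpha$ with each $I_\alpha$ an ideal of $R$, and write $M = A \oplus C$ for a complementary submodule $C$. The first step is to note that $A$ is pure in $M$: given a finite system $\sum_{j} r_{ij} x_j = a_i \in A$ that is solvable in $M$, one takes a solution in $M$ and projects it onto $A$ along $C$; since each $a_i$ already lies in $A$, the projected tuple is again a solution, now lying in $A$. This is the routine fact that every direct summand is a pure submodule.

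Second, I would observe that each nonzero ideal $I_\alpha$ is a torsion-free module of rank $1$, hence of countable rank, so that $M$ is a direct sum of countable rank modules. Applying the cited theorem of Kaplansky to the summand $A$, we conclude that $A$ is itself a direct sum of countable rank submodules.

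With these two facts in hand, every hypothesis of Lemma~\ref{Lemma:6.5} holds for $A \leq M$: the ambient module $M$ is a direct sum of ideals of $R$, the submodule $A$ is pure in $M$, and $A$ decomposes as a direct sum of countable rank submodules. Lemma~\ref{Lemma:6.5} therefore yields that $A$ is isomorphic to a direct sum of ideals of $R$, which is the assertion. I do not expect a genuine obstacle here; the only point requiring attention is making sure that the two structural inputs feeding into Lemma~\ref{Lemma:6.5}—purity and countable-rank decomposability—are separately secured, the former from the summand property and the latter from Kaplansky's theorem.
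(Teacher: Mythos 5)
Your proposal is correct and follows exactly the paper's own argument: summand $\Rightarrow$ pure, Kaplansky's theorem $\Rightarrow$ $A$ is a direct sum of countable rank submodules, then Lemma~\ref{Lemma:6.5} finishes. The only difference is that you spell out the routine verifications (purity via projection, rank~$1$ of ideals) that the paper leaves implicit.
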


\begin{proof}
If $M$ is a direct sum of ideals of $R$, then it is a direct sum of countable rank submodules. If $A$ is a direct summand of $M$, then it is pure in $M$ and a direct sum of countable rank submodules. The conclusion is achieved now by means of Lemma \ref {Lemma:6.5}.
\end{proof}

\section{Main results\label{Sec:Main}}

Recall that a continuous, well-ordered, ascending chain of a module $M$ is an ascending chain
\begin{equation}
0 = A _0 \leq A _1 \leq \dots \leq A _\alpha \leq A _{\alpha + 1} \leq \dots \quad (\alpha < \tau) \label {Eq:Chain6.9}
\end{equation}
of submodules of $M$ such that $A _\alpha = \bigcup _{\gamma < \alpha} A _\gamma$, for every limit ordinal $\alpha < \tau$.

\begin{lemma}
A torsion-free $R$-module $A$ is isomorphic to a direct sum of ideals of $R$ if it is the union of a continuous, well-ordered, ascending chain \eqref {Eq:Chain6.9} of submodules, such that the following properties are satisfied, for every $\alpha < \tau$:
\begin{enumerate}
\item[\rm (i)] $A _\alpha$ is a balanced submodule of $A _{\alpha + 1}$, and
\item[\rm (ii)] $A _{\alpha + 1} / A _\alpha$ is isomorphic to a direct sum of ideals of $R$.
\end{enumerate} \label {Lemma:6.9}
\end{lemma}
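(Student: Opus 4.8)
The plan is to produce an explicit internal direct sum decomposition of $A$ indexed by the successor steps of the chain. The first observation is that, for each $\alpha < \tau$, the short-exact sequence
\begin{equation}
0 \rightarrow A _\alpha \rightarrow A _{\alpha + 1} \rightarrow A _{\alpha + 1} / A _\alpha \rightarrow 0
\end{equation}
is balanced-exact by hypothesis (i), while its cokernel $A _{\alpha + 1} / A _\alpha$ is a direct sum of ideals by hypothesis (ii). Since direct sums of rank $1$ modules have the projective property with respect to balanced-exact sequences, the identity map of $A _{\alpha + 1} / A _\alpha$ lifts to a splitting of this sequence. Consequently, $A _{\alpha + 1} = A _\alpha \oplus C _\alpha$ for some submodule $C _\alpha \cong A _{\alpha + 1} / A _\alpha$ which is isomorphic to a direct sum of ideals of $R$.

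Next I would prove, by transfinite induction on $\alpha < \tau$, that $A _\alpha = \bigoplus _{\gamma < \alpha} C _\gamma$ as an internal direct sum. The base case $\alpha = 0$ is the empty sum, and the successor case follows at once from the splitting $A _{\alpha + 1} = A _\alpha \oplus C _\alpha$ together with the inductive hypothesis $A _\alpha = \bigoplus _{\gamma < \alpha} C _\gamma$. The limit case is where continuity of the chain enters: if $\alpha$ is a limit ordinal, then $A _\alpha = \bigcup _{\gamma < \alpha} A _\gamma = \bigcup _{\gamma < \alpha} \bigoplus _{\delta < \gamma} C _\delta$, and one verifies that this union equals $\bigoplus _{\delta < \alpha} C _\delta$. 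Indeed, every element of the union lies in some $A _\gamma$ with $\gamma < \alpha$, hence is a finite sum of elements of the $C _\delta$ with $\delta < \gamma < \alpha$; conversely, any finite sum of elements drawn from the $C _\delta$ with $\delta < \alpha$ already lies in $A _\gamma$ for a suitable $\gamma < \alpha$, which exists precisely because $\alpha$ is a limit. Directness is inherited because any finite dependence relation among the $C _\delta$ is supported in some $A _\gamma$, which is a direct sum by the inductive hypothesis.

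Finally, applying the same union argument to $A = \bigcup _{\alpha < \tau} A _\alpha$ yields $A = \bigoplus _{\alpha < \tau} C _\alpha$, and since each $C _\alpha$ is isomorphic to a direct sum of ideals of $R$, so is $A$, which completes the argument. I expect the main obstacle to be the limit step: one must check both that the set-theoretic union of the partial direct sums exhausts $\bigoplus _{\delta < \alpha} C _\delta$ and that the total sum remains direct, and it is exactly the continuity hypothesis $A _\alpha = \bigcup _{\gamma < \alpha} A _\gamma$ that makes this possible. The remaining steps are formal, resting only on the splitting of the balanced-exact sequences established at the outset.
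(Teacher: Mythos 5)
Your proposal is correct and follows essentially the same route as the paper: split each balanced-exact sequence $0 \rightarrow A_\alpha \rightarrow A_{\alpha+1} \rightarrow A_{\alpha+1}/A_\alpha \rightarrow 0$ using the projective property of direct sums of rank $1$ modules, then assemble $A$ as the internal direct sum of the complements $C_\alpha$. The only difference is that you spell out the transfinite induction (in particular the limit step, where continuity is used), which the paper leaves implicit in the sentence ``Then, $A$ is isomorphic to the direct sum of the modules $B_\alpha$.''
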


\begin{proof}
For every $\alpha < \tau$, the balanced-exact sequence $0 \rightarrow A _\alpha \rightarrow A _{\alpha + 1} \rightarrow A _{\alpha + 1} / A _\alpha \rightarrow 0$ splits. So, there exists a submodule $B _\alpha$ of $A _{\alpha + 1}$ which is isomorphic to a direct sum of ideals of $R$, such that $A _{\alpha + 1} = A _\alpha \oplus B _\alpha$. Then, $A$ is isomorphic to the direct sum of the modules $B _\alpha$ and, so, isomorphic to a direct sum of ideals of $R$.
\end{proof}

The following is a generalization of Pontryagin's criterion of freeness to modules which are isomorphic to direct sums of ideals of an $h$-local Pr\"{u}fer domain.

\begin{theorem}
Let $R$ be an $h$-local Pr\"{u}fer domain. A countable rank, torsion-free module is isomorphic to a direct sum of ideals of $R$ if and only if every finite rank, pure submodule is isomorphic to a direct sum of ideals of $R$.
\end{theorem}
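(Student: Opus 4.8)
The plan is to prove the two implications separately, with essentially all of the substance living in the sufficiency direction, which is the Pontryagin-type argument.

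For necessity, suppose $M$ is isomorphic to a direct sum of ideals of $R$. Since $M$ has countable rank, it is in fact a \emph{countable} direct sum of ideals. Any finite rank, pure submodule $A$ of $M$ is then a pure submodule of a countable direct sum of ideals, so Lemma \ref{Lemma:5.4} applies directly and shows that $A$ is isomorphic to a direct sum of ideals. This direction requires nothing beyond citing Lemma \ref{Lemma:5.4}.

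For sufficiency, assume every finite rank, pure submodule of $M$ is isomorphic to a direct sum of ideals, and the strategy is to build an ascending chain to which Lemma \ref{Lemma:6.9} applies. First I would fix a maximal independent set $\{ e_n : n < \omega \}$ of $M$, and for $n \geq 1$ let $A_n$ be the purification in $M$ of the submodule generated by $e_1, \dots, e_n$, with $A_0 = 0$. Each $A_n$ has finite rank $n$ and is pure in $M$; recall that, since $R$ is Pr\"{u}fer and the modules are torsion-free, purity of $A_n$ is equivalent to $M / A_n$ being torsion-free. Because every element of $M$ is dependent on finitely many of the $e_i$, the chain $0 = A_0 \leq A_1 \leq \dots$ has union $M$, and since $\tau = \omega$ there are no limit ordinals below it, so the chain is trivially continuous and well-ordered.

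The key step is to verify hypotheses (i) and (ii) of Lemma \ref{Lemma:6.9} for consecutive links. Since $A_n$ is pure in $M$, it is pure in $A_{n+1}$, because $A_{n+1} / A_n$ embeds in the torsion-free module $M / A_n$. By the standing hypothesis, $A_{n+1}$, being a finite rank, pure submodule of $M$, is isomorphic to a finite direct sum of ideals, so Olberding's Lemma \ref{Lemma:6.2} applies: $A_n$ is a direct summand of $A_{n+1}$, say $A_{n+1} = A_n \oplus B_n$, and the complement $B_n \cong A_{n+1} / A_n$ is again a summand of a finite direct sum of ideals, hence isomorphic to a direct sum of ideals. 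The splitting of $0 \rightarrow A_n \rightarrow A_{n+1} \rightarrow A_{n+1} / A_n \rightarrow 0$ makes this sequence balanced-exact, since any homomorphism from a rank $1$, torsion-free module into $A_{n+1} / A_n$ lifts through the splitting; thus $A_n$ is balanced in $A_{n+1}$. With (i) and (ii) confirmed, Lemma \ref{Lemma:6.9} yields that $M$ is isomorphic to a direct sum of ideals.

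The main obstacle is the sufficiency direction, and within it the only genuinely load-bearing point is the reduction of balancedness of the links to Olberding's splitting result: once one knows that a pure, finite rank submodule of a finite direct sum of ideals splits off (Lemma \ref{Lemma:6.2}), both the balancedness of $A_n$ in $A_{n+1}$ and the direct-sum-of-ideals form of the quotient come for free, and Lemma \ref{Lemma:6.9} closes the argument. The remaining care is purely organizational, namely choosing the links as purifications of the initial segments of a maximal independent set so that they are simultaneously finite rank, pure, nested, and exhaustive of $M$.
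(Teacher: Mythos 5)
Your proof is correct and follows essentially the same route as the paper: both directions hinge on Olberding's Lemma \ref{Lemma:6.2}, and the sufficiency argument uses the identical construction of purifying the initial segments of a maximal independent set to get an exhaustive chain of finite rank, pure submodules that split over one another. The only differences are cosmetic packaging---you cite Lemma \ref{Lemma:5.4} for necessity where the paper applies Lemma \ref{Lemma:6.2} to a finite subsum containing the submodule, and you close the sufficiency direction by invoking balancedness and Lemma \ref{Lemma:6.9}, whereas the paper assembles $M \cong \bigoplus_{n < \omega} B_n$ directly from the splittings $A_{n+1} = A_n \oplus B_n$ (which is exactly what the proof of Lemma \ref{Lemma:6.9} does internally).
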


\begin{proof}
Let $M$ be a countable rank, torsion-free module, and assume that it is isomorphic to a direct sum of ideals of $R$. If $A$ is a finite rank, pure submodule of $M$, then it is contained in a finite direct sum $B$ of ideals in the decomposition of $M$. Lemma \ref {Lemma:6.2} implies that $A$ is isomorphic to a direct sum of ideals of $R$.

Conversely, let $\{ a _n : n \in \mathbb {Z} ^+ \}$ be a maximal independent set in $M$. For every positive integer $n$, let $A _n$ be the purification of $\{ a _1 , \dots , a _n \}$ in $M$. Then, each $A _n$ is a finite rank, pure submodule of $M$ and, by hypothesis, isomorphic to a direct sum of ideals of $R$. In such a way, we construct a countable ascending chain (\ref {Eq:Chain6.11}) of pure submodules of $M$ which are isomorphic to finite direct sums of ideals of $R$. Clearly, $M$ is equal to the union of the links of (\ref {Eq:Chain6.11}). Moreover, Lemma \ref {Lemma:6.2} yields that, for every $n < \omega$, there exists a submodule $B _n$ of $A _{n + 1}$ which is isomorphic to a finite direct sum of ideals of $R$, such that $A _{n + 1} = A _n \oplus B _n$. Consequently, $M$ is isomorphic to a direct sum of ideals of $R$.
\end{proof}

\begin{theorem}
Let $R$ be an $h$-local Pr\"{u}fer domain. A torsion-free module $M$ is isomorphic to a direct sum of ideals of $R$ if it is the union of a countable ascending chain
\begin{equation}
0 = M _0 \leq M _1 \leq \dots \leq M _n \leq \dots \quad (n < \omega) \label {Eq:Chain6.5}
\end{equation}
of submodules, such that the following properties are satisfied, for every $n < \omega$:
\begin{enumerate}
\item[\rm (i)] $M _n$ is isomorphic to a direct sum of ideals of $R$,
\item[\rm (ii)] $M _n$ has countable rank, and
\item[\rm (iii)] $M _n$ is pure in $M$.
\end{enumerate}
\end{theorem}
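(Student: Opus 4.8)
The plan is to exhibit $M$ as the union of a continuous, well-ordered, ascending chain of finite-rank pure submodules whose consecutive quotients are balanced and isomorphic to direct sums of ideals, and then to invoke Lemma \ref{Lemma:6.9}. The first observation I would record is that, since each $M _n$ has countable rank and the chain $(M _n) _{n < \omega}$ is countable, the module $M = \bigcup _n M _n$ itself has countable rank: a maximal independent set $S$ of $M$ meets each $M _n$ in an independent subset of $M _n$, hence of cardinality at most $\aleph _0$, and $S = \bigcup _n (S \cap M _n)$, so $|S| \leq \aleph _0 \cdot \aleph _0 = \aleph _0$. I would therefore fix a maximal independent set $\{ x _k : k \in \mathbb {Z} ^+ \}$ of $M$.

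For every $n$ I would let $A _n$ be the purification in $M$ of $\langle x _1 , \dots , x _n \rangle$, with $A _0 = 0$. Each $A _n$ is then a finite-rank, pure submodule of $M$, the chain $0 = A _0 \leq A _1 \leq \cdots$ is ascending, and its union is all of $M$: given $y \in M$, maximality of $\{ x _k \}$ yields a nontrivial relation $r y = \sum _k r _k x _k$ with $r \neq 0$, so that $y$ lies in the purification $A _n$ for $n$ large enough. Since $\omega$ is the only relevant limit ordinal, the chain is automatically continuous and well-ordered.

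The key step is to show that each $A _n$ is isomorphic to a direct sum of ideals and that $A _n$ is a summand of $A _{n + 1}$. Because $x _1 , \dots , x _n$ lie in a single link $M _N$ and $M _N$ is pure in $M$, the purification $A _n$ is already contained in $M _N$ and is pure there; as $M _N$ is a countable direct sum of ideals, Lemma \ref {Lemma:5.4} shows that $A _n$ is isomorphic to a direct sum of ideals, necessarily finite since $A _n$ has finite rank. Thus $A _{n + 1}$ is (isomorphic to) a finite direct sum of ideals in which $A _n$ sits as a pure submodule, and Lemma \ref {Lemma:6.2} provides that $A _n$ is a direct summand of $A _{n + 1}$. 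A direct summand is in particular balanced, and the complementary summand $A _{n + 1} / A _n$ is isomorphic to a direct sum of ideals. Both hypotheses of Lemma \ref {Lemma:6.9} are therefore satisfied for every $n < \omega$, and I would conclude that $M$ is isomorphic to a direct sum of ideals.

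The main obstacle, and the only place where the $h$-local Pr\"{u}fer hypothesis is genuinely used, is the passage from ``$A _n$ pure in $M$'' to ``$A _n$ a summand isomorphic to a direct sum of ideals,'' since over a general domain a pure submodule of a direct sum of ideals (even a finite one) need be neither a summand nor a direct sum of ideals; this is exactly what Lemmas \ref {Lemma:6.2} and \ref {Lemma:5.4} supply. I would also take care to justify that the purification $A _n$ really lands inside one link $M _N$, which rests on the purity of the $M _N$ in hypothesis (iii); without it the finite-rank reduction would break down. Finally, since $M$ turns out to have countable rank, this theorem is in fact subsumed by the preceding Pontryagin-type criterion, so an alternative route is to verify its hypothesis directly by the same finite-rank analysis.
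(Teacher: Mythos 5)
Your proposal is correct and follows essentially the same route as the paper's proof: both use the countable rank of $M$ to build a countable ascending chain of finite-rank pure submodules containing an increasing maximal independent set, use purity of the links $M_n$ to force each finite-rank piece inside a single $M_N$, identify it there as a finite direct sum of ideals, and then split consecutive links via Olberding's Lemma \ref{Lemma:6.2} to obtain $A_{n+1} = A_n \oplus B_n$ and $M \cong \oplus_n B_n$. The only cosmetic differences are that the paper realizes each new link as a finite subsum of a fixed decomposition of some $M_k$ and applies Lemma \ref{Lemma:6.2} directly, whereas you purify inside $M$ and invoke Lemma \ref{Lemma:5.4}, and that you route the final conclusion through Lemma \ref{Lemma:6.9} rather than assembling the direct sum by hand.
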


\begin{proof}
Fix a countable maximal independent set $\{ a _n : n \in \mathbb {Z} ^+ \}$ of $M$, and assume that we have already constructed the links of a finite ascending chain
\begin{equation}
0 = A _0 \leq A _1 \leq \dots \leq A _n,
\end{equation}
for some $n < \omega$, such that the following properties are satisfied for every $i = 1 , \dots , n$:
\begin{enumerate}
\item[(a)] $A _i$ is isomorphic to a finite direct sum of ideals of $R$,
\item[(b)] $A _i$ contains $\{ a _1 , \dots , a _i \}$, and
\item[(c)] $A _i$ is pure in $M$.
\end{enumerate}

Let $X _n$ be a maximal independent set of $A _n$, and let $k < \omega$ be such that $M _k$ contains both $X _n$ and $a _{n + 1}$. Clearly, $A _n + a _{n + 1} R$ is contained in a finite direct sum $A _{n + 1}$ of ideals in the decomposition of $M _k$. Using induction, we construct a countable ascending chain (\ref {Eq:Chain6.11}) of finite rank, pure submodules of $M$ which are finite direct sums of ideals of $R$, and whose union is equal to $M$. Lemma \ref {Lemma:6.2} implies that $A _{n + 1} = A _n \oplus B _n$, for every $n < \omega$, where $B _n$ is isomorphic to a finite direct sum of ideals of $R$. It follows that $M$ itself is isomorphic to a direct sum of ideals of $R$.
\end{proof}

A \emph {$G (\aleph _0)$-family} of an $R$-module $M$ is a family $\mathcal {B}$ consisting of submodules of $M$, with the following properties:
\begin{enumerate}
\item[(i)] $0 , M \in \mathcal {B}$,
\item[(ii)] $\mathcal {B}$ is closed under unions of ascending chains of arbitrary lengths, and
\item[(iii)] for every $A _0 \in \mathcal {B}$ and every countable set $H \subseteq M$, there exists $A \in \mathcal {B}$ containing $A _0$ and $H$, such that $A / A _0$ is countably generated.
\end{enumerate}
Clearly, an intersection of a countable number of $G (\aleph _0)$-families of submodules of $M$ is again a $G (\aleph _0)$-family of submodules of $M$. The `rank version' of this definition is called a $G (\aleph _0) ^\prime$-family. More precisely, a \emph {$G (\aleph _0) ^\prime$-family} of $M$ is a family $\mathcal {B}$ of submodules of $M$, satisfying (i) and (ii) above, in addition to the property:
\begin{enumerate}
\item[(iii)$^\prime$] for every $A _0 \in \mathcal {B}$ and every countable set $H \subseteq M$, there exists $A \in \mathcal {B}$ containing $A _0$ and $H$, such that $A / A _0$ has countable rank.
\end{enumerate}
A $G (\aleph _0)$-family of submodules of $M$ is a \emph {tight system} if, in addition, it satisfies:
\begin{enumerate}
\setlength{\itemsep}{-4pt}
\item[(iv)] for every $A \in \mathcal {B}$, $\pd _R A \leq 1$ and $\pd _R (M / A) \leq 1$.
\end{enumerate}

It is worth noticing that every module has a $G (\aleph _0)$-family of submodules, namely, the collection of all its submodules. However, not every module has a $G (\aleph _0)$-family consisting of pure submodules. Nevertheless, Bazzoni and Fuchs proved \cite {Bazzoni-Fuchs} that every torsion-free module of projective dimension at most equal to $1$ over a valuation domain has a tight system consisting of pure submodules. 

It is important to recall that valuation domains have a unique maximal ideal. Moreover, localizations of Pr\"{u}fer domains are again Pr\"{u}fer domains and, particularly, localizations of Pr\"{u}fer domains at prime ideals are valuation domains. As a consequence, every torsion-free module of projective dimension at most $1$ over a Pr\"{u}fer domain with a countable number of maximal ideals has a $G (\aleph _0)$-family consisting of pure submodules. Indeed, let $M$ be a torsion-free $R$-module of projective dimension at most $1$, where $R$ is a Pr\"{u}fer domain with a countable number of maximal ideals. For each maximal ideal $P$ of $R$, let $\mathcal {B} _P ^\prime$ be a $G (\aleph _0)$-family of pure submodules of the localization of $M$ with respect to $P$, and let $\mathcal {B} _P = \{ A \cap M : A \in \mathcal {B} _P ^\prime \}$. The desired family is the intersection of all the families $\mathcal {B} _P$.

% $\{ P _n \} _{n \in \mathbb {Z} ^+}$. For each $n \in \mathbb {Z} ^+$, let $\mathcal {B} _n ^\prime$ be a $G (\aleph _0)$-family of pure submodules of the localization of $M$ with respect to $P _n$, and let $\mathcal {B} _n = \{ B \cap M : B \in \mathcal {B} _n ^\prime \}$. The desired $G (\aleph _0)$-family of pure submodules of $M$ is the families $\mathcal {B} _n$, with $n \in \mathbb {Z} ^+$.

We prove next a generalization of Hill's theorem to modules which are isomorphic to direct sums of ideals of $h$-local Pr\"{u}fer domains.

\begin{theorem}
Let $R$ be an $h$-local Pr\"{u}fer domain. A torsion-free module $M$ is isomorphic to a direct sum of ideals of $R$ if it is the union of a countable ascending chain (\ref {Eq:Chain6.5}) of submodules, such that the following properties are satisfied, for every $n < \omega$:
\begin{enumerate}
\item[\rm (i)] $M _n$ is isomorphic to a direct sum of ideals of $R$,
\item[\rm (ii)] $M _{n + 1} / M _n$ has a $G (\aleph _0)$-family $\mathcal {C} _n$ of pure submodules, and
\item[\rm (iii)] $M _n$ is pure in $M$.
\end{enumerate} \label{Thm:6.13}
\end{theorem}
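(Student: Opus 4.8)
The plan is to reduce the statement to Lemma \ref{Lemma:6.9} by exhibiting $M$ as the union of a continuous, well-ordered, ascending chain whose successive inclusions are balanced and whose successive quotients are direct sums of ideals. The device that manufactures such a chain is a $G(\aleph_0)$-family of pure submodules of $M$ adapted to the filtration $(M_n)$. Concretely, I would fix for each $n$ a decomposition $M_n \cong \bigoplus_i I_i^{(n)}$ into ideals and let $\mathcal{D}_n$ be the $G(\aleph_0)$-family of its sub-sums; every member of $\mathcal{D}_n$ is a summand of $M_n$, hence a direct sum of ideals, and the corresponding quotients have countable rank. Writing $\bar{A}^{(n)} = ((A\cap M_{n+1})+M_n)/M_n$ for the image of $A$ in $M_{n+1}/M_n$, I would form the families on $M$ induced by the $\mathcal{D}_n$ (through $A \mapsto A\cap M_n$) and by the given $\mathcal{C}_n$ (through $A \mapsto \bar{A}^{(n)}$), and then set
\[
\mathcal{B} = \{\, A \le M : A\cap M_n \in \mathcal{D}_n \text{ and } \bar{A}^{(n)} \in \mathcal{C}_n \text{ for every } n<\omega \,\}.
\]
Because an intersection of countably many $G(\aleph_0)$-families of submodules of $M$ is again such a family, the point to check is that each induced family is a $G(\aleph_0)$-family; this reduces to a countable closure argument for the extension property, in which enlarging $A$ at a level $M_n$ forces, through the inclusion $M_n \hookrightarrow M_{n+1}$ and the family $\mathcal{C}_n$, only countably much further enlargement at the neighbouring levels.

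Granting the family, its members are pure in $M$: since $A\cap M_n$ is a summand of $M_n$ and each $M_n$ is pure in $M$, the relative-divisibility test $A\cap rM \le rA$ is settled level by level, as every element of $M$ and every witness to a divisibility already lies in some $M_k$. For two comparable members $A \le A'$ of $\mathcal{B}$ with $A'/A$ of countable rank, I would filter the quotient by the continuous chain $G_n = ((A'\cap M_n)+A)/A$, whose union is $A'/A$, and identify the successive quotient, via the modular law, as
\[
G_{n+1}/G_n \;\cong\; \frac{A'\cap M_{n+1}}{(A'\cap M_n)+(A\cap M_{n+1})} \;\cong\; \bar{A'}^{(n)}/\bar{A}^{(n)},
\]
a countable-rank quotient of two members of $\mathcal{C}_n$. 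Showing each such quotient to be a direct sum of ideals and each inclusion $G_n \le G_{n+1}$ to be balanced (the latter reduced by Lemma \ref{Lemma:5.1} to rank-one extensions and then to Olberding's Lemma \ref{Lemma:6.2}), I would invoke Lemma \ref{Lemma:6.9} to conclude that $A'/A$ is a direct sum of ideals and that $A$ is balanced in $A'$.

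Finally, a standard transfinite construction — well-order $M$, apply the extension property of $\mathcal{B}$ repeatedly, and take unions at limit stages — produces a continuous, well-ordered chain $0 = A_0 \le A_1 \le \dots \le A_\tau = M$ of members of $\mathcal{B}$ with every $A_{\alpha+1}/A_\alpha$ of countable rank. By the preceding paragraph each step is balanced with a direct-sum-of-ideals quotient, so Lemma \ref{Lemma:6.9} yields that $M$ is isomorphic to a direct sum of ideals, as required.

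I expect the main obstacle to be the verification, in the middle step, that the countable-rank quotient $\bar{A'}^{(n)}/\bar{A}^{(n)}$ is genuinely a direct sum of ideals. The difficulty is that $M_{n+1}/M_n$ is not assumed to be a direct sum of ideals — only to carry the family $\mathcal{C}_n$ — so its structure cannot be read off directly. My intended remedy is to pull the quotient back into $M_{n+1}$ itself: exploiting that $A\cap M_{n+1}$ and $A'\cap M_{n+1}$ are pure in the direct sum of ideals $M_{n+1}$ and that the relevant rank is countable, I would realise $\bar{A'}^{(n)}/\bar{A}^{(n)}$ as a pure submodule of a countable direct summand of $M_{n+1}$ and appeal to Lemma \ref{Lemma:5.4} and Lemma \ref{Lemma:6.2}. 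Controlling the sum $(A'\cap M_n)+(A\cap M_{n+1})$ — in particular ensuring its purity, so that the pullback behaves — is the delicate technical point on which the argument turns, and it is precisely what the summand condition ``$A\cap M_n \in \mathcal{D}_n$'' is designed to keep manageable.
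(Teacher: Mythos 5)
Your family $\mathcal{B}$ is, after an application of the modular law ($(A+M_j)\cap M_{j+1}=(A\cap M_{j+1})+M_j$ because $M_j\le M_{j+1}$), exactly the family the paper constructs, and your concluding transfinite induction is also the paper's. The genuine gap is in the middle step --- the claim that for comparable $A\le A'$ in $\mathcal{B}$ with $A'/A$ of countable rank, $A'/A$ is a direct sum of ideals and $A$ is balanced in $A'$ --- and it is exactly the point you flagged yourself. Your filtration $G_n$ pushes the whole problem into the quotients $\bar{A'}^{(n)}/\bar{A}^{(n)}$, which are subquotients of $M_{n+1}/M_n$; but hypothesis (ii) gives no structural information about $M_{n+1}/M_n$ beyond the existence of $\mathcal{C}_n$, so there is nothing from which to prove these quotients are direct sums of ideals. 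Your proposed remedy is circular: to ``realise'' $(A'\cap M_{n+1})/\bigl((A'\cap M_n)+(A\cap M_{n+1})\bigr)$ as a pure submodule of a summand of $M_{n+1}$, you would need the extension over the denominator to split, which is essentially what is to be proved. Lemmas \ref{Lemma:5.4} and \ref{Lemma:6.2} concern pure \emph{submodules} of direct sums of ideals, never quotients, and quotients of direct sums of ideals by pure submodules can be essentially arbitrary torsion-free modules (over $\mathbb{Z}$, every torsion-free group is $F/K$ with $F$ free and $K$ pure in $F$, since torsion-freeness of $F/K$ implies purity of $K$). The same objection defeats your plan to get balancedness of $G_n$ in $G_{n+1}$ from Lemmas \ref{Lemma:5.1} and \ref{Lemma:6.2}: the relevant rank-one extensions live in modules of unknown structure and of countable, not finite, rank, so Lemma \ref{Lemma:6.2} does not apply.

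The paper's proof is arranged precisely so that the quotients $M_{n+1}/M_n$ are never analysed. Its Lemma \ref{Lemma:6.14} shows that for $A\in\mathcal{B}$, every finite-rank pure submodule $D/A$ of $M/A$ splits as $D=A\oplus C$ with $C$ a finite-rank pure submodule of some $M_k$ itself: representatives of a maximal independent set of $D$ over $A$ lie in some $M_k$; purity of $A+M_k$ in $M$ (this is what the membership conditions involving the $\mathcal{C}_j$ actually buy) forces $D=A+(D\cap M_k)$; and since $A\cap M_k$ is a sub-sum, hence a summand, of $M_k$, one gets $D\cap M_k=(A\cap M_k)\oplus C$ and $D=A\oplus C$, with $C$ a direct sum of ideals by Lemma \ref{Lemma:6.2}. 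Thus $D/A$ is realised inside the direct sum of ideals $M_k$, not inside $M_{n+1}/M_n$. Countable-rank quotients of successive members of the transfinite chain are then direct sums of ideals by the generalization of Pontryagin's criterion proved in Section \ref{Sec:Main} (a countable-rank torsion-free module all of whose finite-rank pure submodules are direct sums of ideals is one itself), balancedness of each $A_\alpha$ follows from the rank-one case of the splitting together with Lemma \ref{Lemma:5.1}, and Lemma \ref{Lemma:6.9} concludes. If you replace your level-by-level filtration of $A'/A$ with this splitting argument, the rest of your outline goes through; as written, the key step is not established.
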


For each $n < \omega$, the module $M _n$ can be written as the direct sum of $R$-modules $I _\alpha ^n$, with $\alpha$ in some set of indexes $\Omega _n$, each of which is isomorphic to an ideal of $R$. The collection $\mathcal {B} _n$ of submodules of $M$ of the form $\oplus _{\alpha \in \Lambda} I _\alpha ^n$, for some $\Lambda \subseteq \Omega _n$, is a $G (\aleph _0) ^\prime$-family of pure submodules of $M _n$, for every $n < \omega$. Moreover, the collection
\begin{equation}
\mathcal {B} _n ^\prime = \left\{ A \in \mathcal {B} _n : \frac {(A + M _j) \cap M _{j + 1}} {M _j} \in \mathcal {C} _j \textrm{, for every } j < n \right\} \quad (n < \omega)
\end{equation}
is a $G (\aleph _0) ^\prime$-family of pure submodules of $M _n$. Furthermore, the class
\begin{equation}
\mathcal {B} = \{ A \leq M : A \cap M _n \in \mathcal {B} _n ^\prime \textrm {, for every } n < \omega \}
\end{equation}
is a $G (\aleph _0) ^\prime$-family of pure submodules of $M$, such that for every $n < \omega$ and every $A \in \mathcal {B} _n$, the module $A + M _n$ is pure in $M$.

\begin{lemma}
For every $A \in \mathcal {B}$, finite rank, pure submodules of $M / A$ are isomorphic to direct sums of ideals of $R$. \label {Lemma:6.14}
\end{lemma}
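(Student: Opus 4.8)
The plan is to realize every finite rank, pure submodule of $M / A$ as a pure submodule of one of the quotients $(M _k + A) / A$, each of which turns out to be a direct sum of ideals, and then to invoke Lemma \ref{Lemma:6.2}. First I would record the basic structure of $M / A$. Since $A \in \mathcal {B}$, the module $A$ is pure in $M$, so $M / A$ is torsion-free, and $A \cap M _k \in \mathcal {B} _k$ for every $k < \omega$. Thus $A \cap M _k$ is one of the sub-direct-sums $\oplus _{\alpha \in \Lambda} I _\alpha ^k$ of $M _k = \oplus _{\alpha \in \Omega _k} I _\alpha ^k$, whence
\[
(M _k + A) / A \cong M _k / (A \cap M _k) \cong \bigoplus _{\alpha \in \Omega _k \setminus \Lambda} I _\alpha ^k
\]
is itself a direct sum of ideals of $R$. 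Moreover, $M _k + A$ is pure in $M$, so, as both $A$ and $M _k + A$ are pure in $M$ and purity coincides with relative divisibility over the Pr\"{u}fer domain $R$, the quotient $(M _k + A) / A$ is pure in $M / A$. Finally, $M / A = \bigcup _{k < \omega} (M _k + A) / A$.

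Next I would exploit finiteness of rank. Let $F / A$ be a finite rank, pure submodule of $M / A$, and fix a finite maximal independent set of $F / A$. Lifting its members to $M$ and using $M = \bigcup _{k} M _k$, I can find $k < \omega$ so that this set is contained in $(M _k + A) / A$. Since $F / A$ is the purification in $M / A$ of its maximal independent set, and since $(M _k + A) / A$ is pure in $M / A$, the purification computed in $M / A$ agrees with the one computed in the pure submodule $(M _k + A) / A$; hence $F / A$ is contained in, and pure in, $(M _k + A) / A$.

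It then remains to show that a finite rank, pure submodule of the direct sum of ideals $(M _k + A) / A$ is itself a direct sum of ideals. Here I would again use finiteness of rank: a maximal independent set of $F / A$ lies in a finite sub-direct-sum $D _0$ of $(M _k + A) / A$, and $D _0$, being a direct summand, is pure; so, as before, $F / A$ is contained in and pure in $D _0$. Since $D _0$ is a finite direct sum of ideals of $R$, Lemma \ref {Lemma:6.2} yields that $F / A$ is isomorphic to a direct sum of ideals of $R$, as desired.

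The main obstacle is the transfer of purity in the second step: one must justify both that $(M _k + A) / A$ is pure in $M / A$ and that purification of a subset inside a pure submodule coincides with purification inside the ambient module. The first point rests on the property, established in the discussion preceding the lemma, that $M _k + A$ is pure in $M$ for every $A \in \mathcal {B}$; the second is a routine consequence of relative divisibility over a Pr\"{u}fer domain (if $N$ is pure in $M$ and $S \subseteq N$, then $rx \in RS \leq N$ with $x \in M$ forces $x \in N$ by $N \cap rM \leq rN$ and torsion-freeness), but it is the step that actually confines $F / A$ to a single $(M _k + A) / A$ and so must be handled with care.
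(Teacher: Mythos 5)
Your proof is correct, and it takes a genuinely different (though closely related) route from the paper's. You transfer the whole problem to the quotient $M/A$: using $A \cap M _k \in \mathcal {B} _k ^\prime$ you identify $(M _k + A)/A \cong M _k / (A \cap M _k)$ with a direct sum of ideals, use purity of $M _k + A$ in $M$ to see that this quotient is pure in $M/A$, confine a finite rank, pure submodule $F/A$ first to it and then to a finite sub-direct-sum via the purification argument, and finish with Lemma \ref{Lemma:6.2}. The paper instead stays inside $M$: writing the pure submodule as $D/A$, it uses the modular law to get $D = A + (D \cap M _k) = D \cap (A + M _k)$, splits $M _k = (A \cap M _k) \oplus B$, and exhibits an internal complement $D = A \oplus C$ with $C = B \cap D$ a finite rank, pure submodule of $M _k$, applying Lemma \ref{Lemma:6.2} to $C$. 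The ingredients are identical (representatives of a maximal independent set inside some $M _k$, purity of $A + M _k$ in $M$, the membership $A \cap M _k \in \mathcal {B} _k ^\prime$, and Olberding's lemma), but the outputs differ in a way that matters later: the paper's argument establishes the stronger fact that $A$ is a direct summand of every such $D$, and it is precisely this splitting that the next lemma (balancedness of every $A \in \mathcal {B}$ in $M$) cites as ``a consequence of the proof of Lemma \ref{Lemma:6.14}.'' Your quotient-side argument determines the isomorphism type of $D/A$ but does not split $0 \rightarrow A \rightarrow D \rightarrow D/A \rightarrow 0$; since the ideals involved need not be finitely generated, hence need not be projective, that splitting does not follow formally from $D/A$ being a direct sum of ideals. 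So your proof is a valid, and arguably cleaner, proof of the lemma as stated, but if it replaced the paper's proof, the subsequent balancedness lemma would need a separate argument---essentially rerunning your purity analysis inside $M$ as the paper does.
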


\begin{proof}
Let $D$ be a submodule of $M$ containing $A$, such that $D / A$ is a finite rank, pure submodule of $M / A$, and choose a countable set of representatives $S \subseteq D$ of a maximal independent system of $D$ modulo $A$. Let $k < \omega$ be an index such that $S \subseteq M _k$. Since $A + M _k$ and $D$ are pure in $M$, then $A + (D \cap M _k) = D \cap (A + M _k)$ is a pure submodule between $A$ and $D$ which contains $S$, so that $D = A + (D \cap M _k)$. Now, $A \cap M _k$ belongs to $\mathcal {B} _k ^\prime$ and, consequently, it is a direct summand of $M _k$, say, $M _k = (A \cap M _k) \oplus B$, for some submodule $B$ of $M _k$. Therefore, $D \cap M _k = (A \cap M _k) \oplus (B \cap D)$.

Being isomorphic to a submodule of a finite rank module, $C = B \cap D$ is a finite rank, pure submodule in the module $M _k$ which, in turn, is isomorphic to a direct sum of ideals. So, $C$ itself is isomorphic to a direct sum of ideals by Lemma \ref {Lemma:6.2}. Moreover, since
\begin{equation}
D = A + (D \cap M _k) = (A + (A \cap M _k)) \oplus C = A \oplus C,
\end{equation}
$D / A$ is isomorphic to a direct sum of ideals.
\end{proof}

The following result is a consequence of the proof of Lemma \ref {Lemma:6.14}.

\begin{lemma}
Every $A \in \mathcal {B}$ is a balanced submodule of $M$.
\end{lemma}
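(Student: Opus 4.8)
The plan is to read off balancedness directly from the splitting produced inside the proof of Lemma~\ref{Lemma:6.14}, so that no new machinery is required. Fix $A \in \mathcal{B}$. Since every member of $\mathcal{B}$ is a pure submodule of the torsion-free module $M$, the quotient $M / A$ is torsion-free, which settles the first of the two requirements in the definition of a balanced submodule. It remains to verify the projective property of rank $1$, torsion-free modules with respect to the sequence $0 \rightarrow A \rightarrow M \rightarrow M / A \rightarrow 0$.

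To this end, I would take a rank $1$, torsion-free module $J$ together with a homomorphism $\phi$ from $J$ into $M / A$. Its image $\phi (J)$ has rank at most $1$, so the purification $D / A$ of $\phi (J)$ in $M / A$ again has rank at most $1$ and is pure in $M / A$; here $D$ denotes the submodule of $M$ with $A \leq D$ corresponding to this purification. Thus $D / A$ is a finite rank, pure submodule of $M / A$ and $A \in \mathcal{B}$, which is precisely the configuration treated in the proof of Lemma~\ref{Lemma:6.14}. I would invoke the conclusion reached there, namely the decomposition $D = A \oplus C$, in which the submodule $C$ maps isomorphically onto $D / A$ under the canonical projection.

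This decomposition supplies a section $\rho$ of the projection $D \rightarrow D / A$, obtained by inverting the isomorphism $C \rightarrow D / A$ and regarding the result as a map into $D$. Setting $\psi = \rho \phi$ then yields a homomorphism from $J$ into $D \leq M$ whose composition with the projection $M \rightarrow M / A$ agrees with $\phi$, because $\phi (J) \subseteq D / A$ and $\rho$ splits $D \rightarrow D / A$. Hence $J$ is projective with respect to the bottom sequence, and, $J$ being arbitrary, $A$ is balanced in $M$.

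The only point demanding care is the passage from the internal decomposition $D = A \oplus C$ of Lemma~\ref{Lemma:6.14} to a genuine section $\rho$ of the quotient map, and the observation that the lift $\psi$ automatically takes values in $M$, which is immediate from $D \leq M$. I do not anticipate any real obstacle: the entire argument is a reformulation of the direct-sum splitting already established, recast in the lifting language of balancedness, with the purification step guaranteeing that the finite rank, pure hypothesis of Lemma~\ref{Lemma:6.14} is met.
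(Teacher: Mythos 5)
Your proof is correct, and at bottom it rests on the same ingredient as the paper's: the internal decomposition $D = A \oplus C$ established in the proof (not merely the statement) of Lemma~\ref{Lemma:6.14}. The difference is in how the rank~$1$ lifting is organized. The paper asserts that $A$ is a direct summand of \emph{every} submodule $D$ of $M$ with $D/A$ of rank $1$ and then invokes the criterion of Lemma~\ref{Lemma:5.1}, whose hypothesis demands balancedness in all rank~$1$ extensions, pure or not; but the proof of Lemma~\ref{Lemma:6.14} delivers the splitting only when $D/A$ is \emph{pure} in $M/A$ (its argument needs $D$ pure in $M$ to conclude $D = A + (D \cap M_k)$). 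You instead reprove the criterion inline: you pass from the image $\phi(J)$ to its purification $D/A$, which has rank at most $1$ and is pure, apply the splitting there, and lift through the resulting section $\rho$. This purification step is exactly what is needed to bridge the gap the paper leaves implicit --- alternatively one can close that gap by noting that if $D'/A$ is the purification of $D/A$ and $D' = A \oplus C'$, then $D = A \oplus (C' \cap D)$ for every $A \leq D \leq D'$, so the paper's blanket claim is in fact true, though it requires this extra observation. In short, your argument buys a self-contained, airtight verification at the cost of not reusing Lemma~\ref{Lemma:5.1}; the paper's is shorter but, read literally, skips the reduction from arbitrary rank~$1$ extensions to pure ones.
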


\begin{proof}
Since $A$ is a direct summand of every submodule $D$ of $M$ for which $D / A$ has rank $1$, then $A$ is balanced in $D$. The conclusion of this result follows now from Lemma \ref {Lemma:5.1}.
\end{proof}

We are now in a position to prove our generalization of Hill's theorem to modules which are isomorphic to direct sums of ideals of $h$-local Pr\"{u}fer domains.

\begin{proof}[Proof of Theorem \ref {Thm:6.13}]
Let $\alpha$ be any nonzero ordinal, and assume that we have already constructed the links of the continuous, well-ordered, ascending chain
\begin{equation}
0 = A _0 \leq A _1 \leq \dots \leq A _\gamma \leq A _{\gamma + 1} \leq \dots \quad (\gamma < \alpha) \label {Eq:Chain6.13}
\end{equation}
of modules of $\mathcal {B}$, such that $A _{\gamma + 1} / A _\gamma$ is isomorphic to a direct sum of ideals of $R$, for every $\gamma < \alpha$. If $\alpha$ is a limit ordinal, we let $A _\alpha$ be the union of the links of chain (\ref {Eq:Chain6.13}). Otherwise, $\alpha$ is the successor of an ordinal number $\beta$. If there exists $x \in M \setminus A _\beta$, then there exists $A _\alpha \in \mathcal {B}$ containing both $x$ and $A _\beta$, such that $A _\alpha / A _\beta$ has countable rank. Finite rank, pure submodules of $A _\alpha / A _\beta$ are isomorphic to direct sums of ideals of $R$ by Lemma \ref {Lemma:6.14}, thus $A _\alpha / A _\beta$ itself is a direct sum of ideals of $R$ by Lemma \ref {Lemma:6.5}. In such way, we construct a continuous, well-ordered, ascending chain (\ref {Eq:Chain6.9}) satisfying the hypothesis of Lemma \ref {Lemma:6.9}. We conclude that $M$ is isomorphic to a direct sum of ideals of $R$.
\end{proof}

Since Dedekind domains are $h$-local Pr\"{u}fer domains for which every ideal is projective, we have the following obvious improvement of Theorem \ref {Thm:6.13}.

\begin{corollary}
Let $R$ be a Dedekind domain with a countable number of maximal ideals. The torsion-free module $M$ is isomorphic to a direct sum of ideals of $R$ if it is the union of a countable ascending chain (\ref {Eq:Chain6.5}) of submodules, such that the following are satisfied, for every $n < \omega$:
\begin{enumerate}
\item[\rm (i)] $M _n$ is isomorphic to a direct sum of ideals of $R$, and
\item[\rm (ii)] $M _n$ is pure in $M$.
\end{enumerate}
\label{Coro:Final}
\end{corollary}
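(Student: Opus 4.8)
The plan is to deduce this corollary directly from Theorem \ref{Thm:6.13} by observing that, under the stronger hypotheses imposed on $R$, the one extra assumption appearing in that theorem becomes automatic. Conditions (i) and (ii) of the corollary are literally conditions (i) and (iii) of Theorem \ref{Thm:6.13}, so the entire task reduces to verifying condition (ii) of that theorem, namely that each quotient $M_{n+1}/M_n$ admits a $G(\aleph_0)$-family $\mathcal{C}_n$ of pure submodules. Once this is established for every $n < \omega$, the chain (\ref{Eq:Chain6.5}) satisfies all three hypotheses of Theorem \ref{Thm:6.13}, and the conclusion that $M$ is isomorphic to a direct sum of ideals of $R$ follows at once.

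To supply condition (ii), I would first recall that a Dedekind domain is, in the sense fixed in Section \ref{Sec:h-local}, a hereditary domain: all of its ideals are projective, so its global dimension is at most $1$, and hence $\pd_R X \le 1$ for every $R$-module $X$. In particular $\pd_R(M_{n+1}/M_n) \le 1$ for each $n$. Moreover a Dedekind domain is an $h$-local Pr\"{u}fer domain, and by hypothesis it has only countably many maximal ideals, so the discussion preceding Theorem \ref{Thm:6.13} is applicable: every torsion-free $R$-module of projective dimension at most $1$ possesses a $G(\aleph_0)$-family consisting of pure submodules.

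It then remains to confirm that each $M_{n+1}/M_n$ is torsion-free, so that the cited construction of a $G(\aleph_0)$-family applies to it. Here I would use that $M_n$ is pure in the torsion-free module $M$ and that, over a Pr\"{u}fer domain, purity and relative divisibility coincide: if $r(m + M_n) = 0$ with $r \neq 0$, then $rm \in M_n \cap rM \le rM_n$, whence $m \in M_n$ by torsion-freeness of $M$, showing $M/M_n$ is torsion-free. Since $M_{n+1}/M_n$ embeds in $M/M_n$, it is torsion-free as well, and of projective dimension at most $1$ by the previous paragraph; it therefore carries a $G(\aleph_0)$-family $\mathcal{C}_n$ of pure submodules. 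This is exactly hypothesis (ii) of Theorem \ref{Thm:6.13}, and the proof concludes by invoking that theorem. I do not expect a genuine obstacle, as the corollary is truly an \emph{obvious improvement}; the only point deserving care is the clean reduction of the $G(\aleph_0)$-family condition to the projective-dimension bound, i.e.\ making sure the preceding remark is applied to a module that is simultaneously torsion-free and of projective dimension at most $1$.
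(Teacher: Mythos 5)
Your proposal is correct and follows essentially the same route as the paper: reduce to Theorem \ref{Thm:6.13} by supplying its hypothesis (ii), using that over a Dedekind domain every module has projective dimension at most $1$ and that the observation preceding Theorem \ref{Thm:6.13} then furnishes the $G(\aleph_0)$-family of pure submodules for each quotient $M_{n+1}/M_n$. The only difference is that you spell out the details (torsion-freeness of $M/M_n$ via relative divisibility, the projective-dimension bound via hereditariness) that the paper's three-line proof leaves implicit.
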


\begin{proof}
For every $n < \omega$, $M _{n + 1} / M _n$ is a torsion-free module of projective dimension less than or equal to $1$. The observation preceding Theorem \ref {Thm:6.13} implies now that every factor module of (\ref {Eq:Chain6.5}) admits a $G (\aleph _0)$-family of pure submodules. By Theorem \ref {Thm:6.13}, $M$ is isomorphic to a direct sum of ideals of $R$.
\end{proof}

It is worth noticing that the conclusion of Corollary \ref{Coro:Final} is reached if, in particular, the links $M _n$ are free modules. Moreover, it is easy to check that if $M$ is a torsion-free module over an integral domain $R$ which is the union of the countable ascending chain (\ref {Eq:Chain6.5}) of projective, pure submodules, then there exists a chain
\begin{equation}
0 = F _0 \leq F _1 \leq \dots \leq F _n \leq \dots \quad (n < \omega) \label {Chain:Fs}
\end{equation}
consisting of free $R$-modules, such that:
\begin{enumerate}
\item[\rm (a)] every $F _n$ is pure in $F _{n + 1}$,
\item[\rm (b)] every $F _n$ contains $M _n$ as a direct summand, say, $F _n = M _n \oplus K _n$,
\item[\rm (c)] $M$ is a direct summand of $F = \bigcup _{n < \omega} F _n$, and 
\item[\rm (d)] $\{ K _n \} _{n < \omega}$ may be chosen to form an ascending chain under inclusion.
\end{enumerate} 
The proof of the next result is now straight-forward.

\begin{corollary}
Let $R$ be a Dedekind domain with a countable number of maximal ideals. The torsion-free module $M$ is projective if it is the union of a countable ascending chain (\ref {Eq:Chain6.5}) of projective, pure submodules. \qed
\end{corollary}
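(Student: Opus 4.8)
The plan is to assemble the conclusion from the structural observation that immediately precedes the corollary, together with Corollary~\ref{Coro:Final} and the defining property of a Dedekind domain; all of the substantive work has in fact already been carried out in that remark. First I would invoke the remark directly: since $M$ is a torsion-free $R$-module expressed as the union of the countable ascending chain (\ref{Eq:Chain6.5}) of projective, pure submodules, there exists a chain (\ref{Chain:Fs}) of \emph{free} $R$-modules $F_n$ enjoying properties (a)--(d). In particular, by property (c), $M$ is a direct summand of $F = \bigcup_{n<\omega} F_n$, so it suffices to prove that $F$ is projective.

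Next I would upgrade the \emph{successive} purity recorded in property (a) to purity inside the whole union. Because purity is transitive along a tower of inclusions, property (a) yields that $F_n$ is pure in $F_m$ for all $m \geq n$; and since any finite system of linear equations that is solvable in the directed union $F$ has a solution lying in some $F_m$ with $m \geq n$, purity of $F_n$ in that $F_m$ forces solvability already in $F_n$. Hence each $F_n$ is pure in $F$. Each $F_n$ is free, so it is isomorphic to a direct sum of copies of $R$, that is, to a direct sum of (principal) ideals of $R$, and $F$ is torsion-free as a union of torsion-free modules.

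These are precisely the hypotheses of Corollary~\ref{Coro:Final} applied to $F$: the torsion-free module $F$ is the union of the countable ascending chain $\{F_n\}_{n<\omega}$ of pure submodules, each isomorphic to a direct sum of ideals of $R$. Corollary~\ref{Coro:Final} therefore gives that $F$ is isomorphic to a direct sum of ideals of $R$. Since $R$ is a Dedekind domain, every ideal of $R$ is projective, so this direct sum---and hence $F$ itself---is projective. Finally, a direct summand of a projective module is projective, so property (c) delivers that $M$ is projective.

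The only point genuinely requiring attention---and the reason the proof is \emph{straightforward} rather than outright trivial---is the passage from the successive purity of property (a) to purity of each $F_n$ in the full union $F$, since this transitivity-and-direct-limit observation is exactly what licenses the application of Corollary~\ref{Coro:Final} to $F$. Beyond this, the argument is a direct citation of the preceding construction of the chain (\ref{Chain:Fs}), of Corollary~\ref{Coro:Final}, and of the fact that ideals of a Dedekind domain are projective.
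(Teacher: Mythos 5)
Your proof is correct and is precisely the argument the paper intends: the corollary is stated with only the remark preceding it as justification, and that intended route is exactly yours --- invoke the chain (\ref{Chain:Fs}) with properties (a)--(d), apply Corollary \ref{Coro:Final} to $F = \bigcup_{n<\omega} F_n$, conclude $F$ is a direct sum of ideals and hence projective since $R$ is Dedekind, and finish with property (c) and the fact that summands of projectives are projective. Your verification that each $F_n$ is pure in $F$ (transitivity of purity along the tower plus the observation that any finite system solvable in the union is solvable in some link) is a genuine detail that the paper sweeps under ``straight-forward,'' and you are right that it is the one step needed to legitimize citing Corollary \ref{Coro:Final}.
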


\subsubsection*{Acknowledgments}

The author wishes to acknowledge the guidance of Prof. L\'{a}szl\'{o} Fuchs at every stage of this investigation. Also, he wishes to thank the anonymous reviewer for her/his kind comments, which led to improve the quality of this work. The results presented here are part of research project PIM10-01 at the Universidad Aut\'{o}noma de Aguascalientes.

\end{document}